\newtheorem{Theorem}{Theorem}[section]
\newtheorem{Lemma}{Lemma}[section]
\newtheorem{Corollary}{Corollary}[section]
\newtheorem{Remark}{Remark}[section]
\newtheorem{Example}{Example}[section]
\numberwithin{equation}{section}
\def\bc{\begin{center}}
\def\ec{\end{center}}
\def\a1{(a_1, a_2, \cdots, a_n)}
\begin{document}
\title[Mean proximality and mean Li-Yorke chaos]{Mean proximality and mean
Li-Yorke chaos}
\author[F. Garcia-Ramos]{Felipe Garcia-Ramos}
\address{Felipe Garcia-Ramos: Instituto de Fisica, Universidad Autonoma de San Luis Potosi 
Av. Manuel Nava 6, SLP, 78290 Mexico}
\email{felipegra@yahoo.com}
\author[L. Jin]{Lei Jin}
\address{Lei Jin: Department of Mathematics, University of Science and
Technology of China, Hefei, Anhui, 230026, P.R.China \& Institute of
Mathematics, Polish Academy of Sciences, Warsaw, 00656, Poland}
\email{jinleim@mail.ustc.edu.cn}
\thanks{The first author was supported by IMPA, CAPES (Brazil) and NSERC (Canada) . The second
author is supported by NNSF of China 11225105, 11371339 and 11431012.}
\subjclass[2010]{37B05; 54H20}
\keywords{mean Li-Yorke chaos, mean Devaney chaos, mean Li-Yorke set, mean
proximal pair, mean asymptotic pair}

\begin{abstract}
We prove that if a topological dynamical system is mean sensitive and
contains a mean proximal pair consisting of a transitive point and a
periodic point, then it is mean Li-Yorke chaotic (DC2 chaotic). On the other
hand we show that a system is mean proximal if and only if it is uniquely
ergodic and the unique measure is supported on one point.
\end{abstract}

\maketitle

\section{Introduction}

In the study of topological dynamical systems different versions of chaos,
which represent complexity in various ways, have been defined and studied.
Some properties that are considered forms of chaos are positive entropy,
topological mixing, Devaney chaos, and Li-Yorke chaos. The relationship
among them has been one of the main interests of this topic. Solving open
questions Huang and Ye \cite{HY} proved that Devaney chaos implies Li-Yorke
chaos; and Blanchard, Glasner, Kolyada and Maass \cite{BGKM} showed that
positive entropy also implies Li-Yorke chaos. It is also known that
topological weak mixing implies Li-Yorke chaos \cite{I}. For all the other
implications among these notions there are counterexamples.

Many of the classical notions in topological dynamics have an analogous
version in the mean sense (e.g. mean sensitivity, mean equicontinuity, and
mean distality \cite{LTY,G,DG,OW}). In a similar way mean forms of Li-Yorke
chaos, also known as distributional chaos (DC) have been defined and studied
\cite{SS,D}. A nice generalization proved by Downarowicz \cite{D} says that
positive topological entropy implies mean Li-Yorke chaos, which strengthens
the result of \cite{BGKM}. It was also mentioned in \cite{D} that mean
Li-Yorke chaos is equivalent to the so-called chaos DC2 which was first
studied for interval systems in \cite{SS}. In our present paper, we would
like to use the former terminology (i.e. calling it mean Li-Yorke). Mean
Li-Yorke chaos does not imply positive entropy (see e.g., \cite{DS,FPS} for
details). For related topics we also recommend \cite{BHS,LY}.

Besides positive topological entropy we do not know of any other condition
that implies mean Li-Yorke chaos. Oprocha showed that Devaney chaos does not
imply mean Li-Yorke chaos \cite{O}. Furthermore there are topologically
mixing systems with dense periodic points and no mean Li-Yorke pairs \cite%
{BGKM}.

Motivated by the ideas and results above, we ask if there is some strong
form of Devaney chaos (\textquotedblleft mean Devaney
chaos\textquotedblright\ one could say), that is stronger than mean Li-Yorke
chaos. We show that with mean sensitivity and a stronger relationship
between transitivity and periodicity we can obtain mean Li-Yorke chaos. As a
consequence of this result we show that it is easy to construct subshifts
with dense mean Li-Yorke subsets.

We also characterize mean proximal systems, which are a
subclass of the classic proximal systems. It is known that a system is
proximal if and only if its unique minimal subset is a fixpoint \cite{AK}.
Among other characterizations we show that a system is mean proximal if and
only if its unique invariant measure is a delta measure on a fixpoint $x_{0}.$
As a corollary we also obtain that mean proximal systems contain no mean Li-Yorke pairs.

\bigskip

We recall some necessary definitions in the following. By a \textit{%
topological dynamical system} (TDS, for short), we mean a pair $(X,T)$,
where $X$ is a compact metric space with the metric $d$, and $T:X\rightarrow
X$ is continuous.

Let $(X,T)$ be a TDS. A point $x\in X$ is called a \textit{transitive point}
if its orbit is dense in $X$, i.e., $\overline{\{T^{n}x:n\geq0\}}=X$; and
called a \textit{periodic point} of period $n\in\mathbb{N}$ if $T^{n}x=x$
but $T^{i}x\neq x$ for $1\leq i\leq n-1$.

\medskip

A pair $(x,y)\in X\times X$ is said to be\textit{\ proximal} if
\begin{equation*}
\liminf\limits_{n\rightarrow\infty}d(T^{n}x,T^{n}y)=0.
\end{equation*}
Given $\eta>0$, a pair $(x,y)\in X\times X$ is called a \textit{Li-Yorke pair%
} if $(x,y)$ is proximal and
\begin{equation*}
\limsup\limits_{n\rightarrow\infty}d(T^{n}x,T^{n}y)\,\,>0.
\end{equation*}
A subset $S\subset X$ is called a\textit{\ Li-Yorke set } (or a scrambled
set) if every pair $(x,y)\in S\times S$ of distinct points is a Li-Yorke
pair. We say that $(X,T)$ is \textit{\ Li-Yorke chaotic} if $X$ contains an
uncountable Li-Yorke subset. It was shown in \cite{DL2} that Li-Yorke sets
have measure zero for every invariant measure.

We say that $(X,T)$ is\textit{\ sensitive}, if there exists $\delta>0$ such
that for every $x\in X$ and every $\epsilon>0$, there is $y\in B(x,\epsilon)$
satisfying
\begin{equation*}
\limsup\limits_{n\rightarrow\infty}d(T^{n}x,T^{n}y)>\delta.
\end{equation*}

Originally a TDS was defined to be \textit{Devaney chaotic} if it is
transitive, sensitive, and has dense periodic points (it was shown later
that the sensitivity hypothesis can be removed). As we noted Devaney chaos
implies Li-Yorke chaos \cite{HY}.

\medskip

Now we define the equivalent ``mean'' forms.

A pair $(x,y)\in X\times X$ is said to be\textit{\ mean proximal} if
\begin{equation*}
\liminf\limits_{n\rightarrow\infty}\frac{1}{n}\sum\limits_{i=0}^{n-1}
d(T^{i}x,T^{i}y)=0.
\end{equation*}

A TDS $(X,T)$ is \textit{mean proximal} if every pair $(x,y)\in X\times X$
is mean proximal. Note that when studying invertible TDS this property is
known as the so-called ``forward mean proximal'' \textit{\ \cite{DL,OW}. }

Given $\eta>0$, a pair $(x,y)\in X\times X$ is called a \textit{mean
Li-Yorke pair} (with modulus $\eta$) if $(x,y)$ is mean proximal and
\begin{equation*}
\limsup\limits_{n\rightarrow\infty}\frac{1}{n}\sum%
\limits_{i=0}^{n-1}d(T^{i}x,T^{i}y)\,\;(\geq\eta)\,>0.
\end{equation*}

A subset $S\subset X$ is called a \textit{mean Li-Yorke set} (with modulus $%
\eta$) if every pair $(x,y)\in S\times S$ of distinct points is a mean
Li-Yorke pair (with modulus $\eta$). We say that $(X,T)$ is \textit{mean
Li-Yorke chaotic} if $X$ contains an uncountable mean Li-Yorke subset.

We say that $(X,T)$ is \textit{mean sensitive} if there exists $\delta>0$
such that for every $x\in X$ and every $\epsilon>0$, there is $y\in
B(x,\epsilon)$ satisfying
\begin{equation*}
\limsup\limits_{n\rightarrow\infty}\frac{1}{n}\sum\limits_{i=0}^{n-1}
d(T^{i}x,T^{i}y)>\delta.
\end{equation*}

\medskip

Next we turn to introducing a new version of chaos. Our aim is to add
something to the version of Devaney chaos such that the chaos in the new
sense implies mean Li-Yorke chaos. It is worth mentioning that there are
Devaney chaotic examples (even with positive entropy) that are not mean
sensitive \cite{GL}. So the first extra hypothesis we add is mean
sensitivity. Now, by observing that if a TDS is Devaney chaotic then we can
find a periodic point and a transitive point such that they are proximal, we
consider the stronger condition: \textquotedblleft there exists a mean
proximal pair which consists of a periodic point and a transitive
point\textquotedblright. Indeed, this condition together with the mean
sensitivity is enough, and we do not need the dense periodic points.

\medskip

\begin{Theorem}
\label{thm1} If a TDS $(X,T)$ is mean sensitive and there is a forward mean
proximal pair of $X\times X$ consisting of a transitive point and a periodic
point, then $(X,T)$ is mean Li-Yorke chaotic; more precisely, there exist a
positive number $\eta$ and a subset $K\subset X$ which is a union of
countably many Cantor sets such that $K$ is a mean Li-Yorke set with modulus
$\eta$.
\end{Theorem}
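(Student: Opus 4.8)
The plan is to construct the required set by hand, as the set of branches of a Cantor scheme, so that simultaneously every pair of branches is forward mean proximal and every pair of distinct branches has its Ces\`aro averages of $d(T^iz,T^iz')$ bounded below by a fixed $\eta>0$ along a sequence of times tending to infinity. Fix the mean sensitivity constant $\delta>0$, let $m$ be the period of the given periodic point $p$, and let $x_0$ be the transitive point with $(x_0,p)$ forward mean proximal. Since a mean sensitive system is sensitive, $X$ is infinite and has no isolated points, and $\eta$ will come out proportional to $\delta$. Two preliminary facts drive everything. \textbf{(Separation.)} Because $\limsup$ exceeding $\delta$ forces the Ces\`aro averages themselves to exceed $\delta$ at arbitrarily large times, mean sensitivity yields: for every $x\in X$, every $\epsilon>0$ and every $M$, there are $y\in B(x,\epsilon)$ and $N\ge M$ with $\frac1N\sum_{i=0}^{N-1}d(T^ix,T^iy)>\delta$; if in addition $\epsilon$ is small relative to the moduli of continuity of $T^0,\dots,T^M$, then this average can be large only because of the block $[M,N)$, so the separation is localized there. \textbf{(Synchronization.)} From $\frac1{N_j}\sum_{i<N_j}d(T^ix_0,T^ip)\to0$ along some $N_j\to\infty$, restricting the average to the residue class $i\equiv0\pmod m$ and using Markov's inequality gives, first, infinitely many $a\equiv0\pmod m$ with $d(T^ax_0,p)$ as small as desired — hence, by uniform continuity and periodicity of $p$, arbitrarily long initial blocks $T^ax_0,T^{a+1}x_0,\dots$ shadowing the single periodic orbit $p,Tp,\dots$ — and, second, that for each fixed $j$ the set $\{\,i<N_j:d(T^ix_0,T^ip)\ge\epsilon\,\}$ has density tending to $0$ in $[0,N_j)$.

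The scheme is a nested family of balls $B(y_w,r_{|w|})$ indexed by finite binary words $w$, with $r_k\downarrow0$ and $B(y_{w0},r_{|w|+1}),B(y_{w1},r_{|w|+1})$ disjoint subsets of $B(y_w,r_{|w|})$; a branch is the unique point $z_\xi$ of $\bigcap_k B(y_{\xi|k},r_k)$. At every stage each $y_w$ is taken $r_{|w|}$-close to an orbit point $T^ax_0$ with $a\equiv0\pmod m$ and $d(T^ax_0,p)$ tiny, and the radii shrink fast enough that, on the time blocks relevant to stage $k$, every descendant of $y_w$ tracks the translate $T^ax_0$, which by (Synchronization) tracks the periodic orbit of $p$ off a sparse bad set. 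This already forces each pair $z_\xi,z_\zeta$ to be forward mean proximal: evaluating $\frac1n\sum_{i<n}d(T^iz_\xi,T^iz_\zeta)$ at the endpoint of a long shadowing block bounds it by twice the shadowing error, plus the vanishing density of the bad set, plus the (vanishing, by design) cumulative density of all separation windows used so far, and so this quantity tends to $0$ along a subsequence.

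To install the Li-Yorke modulus we interleave separation stages, scheduled by a fixed surjection $f\colon\NN\to\NN$ all of whose fibres $f^{-1}(j)$ are infinite. At separation stage $n$ we split each current branch point $y_v\approx T^ax_0$ into a child that keeps shadowing $T^ax_0$ and a child to which (Separation) attaches an orbit that is $\delta$-far from that of $T^ax_0$ — hence, on the good part of a designated window $W_n=[s_n,e_n)$, chosen with $e_n-s_n\ge c\,e_n$ and $e_n\to\infty$, also about $\delta$-far from the orbit of $p$ — this ``$1$''/``$0$'' dichotomy copying the $f(n)$-th coordinate of the parameter; the perturbed child is then re-chosen close to some $T^{a'}x_0$ with $a'\equiv0\pmod m$ and $d(T^{a'}x_0,p)$ tiny, so that it re-synchronizes with the orbit of $p$ after $W_n$, and consecutive windows are kept far apart so that the cumulative density of all windows stays $o(1)$ at their left endpoints. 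Since every fibre of $f$ is infinite, distinct parameters disagree in the $f(n)$-th coordinate for infinitely many $n$, so the two branches are $\tfrac\delta2$-separated on $W_n$ for infinitely many $n$ with $e_n\to\infty$; as $|W_n|\ge c\,e_n$ this gives $\limsup_n\frac1n\sum_{i<n}d(T^iz_\xi,T^iz_\zeta)\ge\eta$ for a suitable $\eta>0$ proportional to $\delta$. Thus $\{z_\xi:\xi\in\{0,1\}^{\NN}\}$ is a Cantor mean Li-Yorke set with modulus $\eta$; running the construction with the shadowing blocks based near each point of a fixed countable dense subset of $X$, and coordinating the window schedules so that pairs from different copies are still separated on infinitely many common-scale windows while remaining mean proximal, delivers $K$ as a dense union of countably many Cantor sets.

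The bulk of the work, and the main obstacle, is the joint bookkeeping: the shrinking radii, the lengths of the shadowing blocks, the witness times $N$ of (Separation), and the window lengths must be interleaved so that each new choice constrains only finitely many earlier ones (through uniform continuity of the $T^i$), so that the separation windows stay sparse enough not to destroy forward mean proximality, and — the delicate point — so that after each separation window the perturbed branch point can still be chosen near an orbit point $T^{a'}x_0$ with $a'\equiv0\pmod m$ and $d(T^{a'}x_0,p)$ small, i.e.\ so that every branch re-synchronizes with the periodic orbit of $p$. It is precisely here that all three hypotheses combine: mean sensitivity for the repeated separations, transitivity of $x_0$ to realize the separating and re-synchronizing points on the orbit, and forward mean proximality of $(x_0,p)$ together with the periodicity of $p$ for the alignment modulo $m$ that lets all branches shadow one and the same periodic orbit. (Alternatively one may first pass to the subsystem $(Y,T^m)$ with $Y=\overline{\{T^{mk}x_0:k\ge0\}}$ — on which $x_0$ is transitive, $p$ is a fixed point, and $(x_0,p)$ is forward mean proximal — and run a Mycielski-type argument there, the two relevant relations, forward mean proximality and mean $\delta$-separation, being $G_\delta$; establishing density of the latter in $Y\times Y$ is where enough of the mean sensitivity of $(X,T)$ must be transferred to $(Y,T^m)$.) Once all parameters are fixed, checking the estimates is routine but long.
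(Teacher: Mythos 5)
Your strategy is recognizably a hands-on version of the right argument, and you have correctly located the delicate point, but you have not resolved it, and it is exactly the missing idea rather than routine bookkeeping. The re-synchronization step needs, inside an arbitrarily small ball around the perturbed point $y$ supplied by mean sensitivity, a point of the form $T^{a'}x_0$ with $a'\equiv 0\pmod m$. Such points are dense only in $X_0:=\overline{\{T^{km}x_0:k\ge 0\}}$, which is in general a proper closed subset of $X$ that may have empty interior; the point $y$ produced by mean sensitivity is only known to lie in $X$, so the required approximant may simply not exist in any ball around $y$ small enough to preserve the separation on the window $W_n$. This breaks the inductive step of the main construction, and the identical gap (density of the mean-$\delta$-separation relation inside $Y\times Y$, when the separating points furnished by mean sensitivity of $(X,T)$ need not lie in $Y$) is the one you admit is open in your parenthetical Mycielski alternative. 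A secondary problem: you require at \emph{every} stage that $y_w$ be $r_{|w|}$-close to some $T^ax_0$ with $d(T^ax_0,p)$ tiny. If ``tiny'' must shrink with the stage (as your uniform-continuity shadowing mechanism suggests), all branch points converge to $p$ and the scheme collapses to the single point $p$; if it is a fixed constant, the condition is redundant, since the density form of your Synchronization fact already makes the orbit of $T^ax_0$ shadow the orbit of $p$ in density whenever $a\equiv 0\pmod m$, wherever $T^ax_0$ sits.

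The paper's resolution of the main gap is short. Write $X=\bigcup_{j=0}^{m-1}X_j$ with $X_j=\overline{\{T^{nm+j}x_0:n\ge 0\}}$; as this is a finite union of closed sets, some $X_j$ has nonempty interior $A_j$ in $X$. One checks (i) that the set $D_\eta$ of pairs whose upper Ces\`aro averages are at least $\eta=\delta/2$ is a dense $G_\delta$ in all of $X\times X$ (mean sensitivity plus a triangle inequality through a hypothetical witness $z$), and (ii) that the mean proximal relation $MP$ is a $G_\delta$ which is dense in each $X_j\times X_j$ (your Synchronization computation). Since $A_j\times A_j$ is open both in $X\times X$ and in $X_j\times X_j$, both densities restrict to $A:=\overline{A_j}$, so $MP\cap D_\eta\cap(A\times A)$ is a symmetric dense $G_\delta$ in the perfect compact set $A$, and Mycielski's lemma produces $K$ with no explicit scheme at all. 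Finally, your closing claim that $K$ can be made dense in $X$ is false in general: branches based in different residue classes $X_j$, $X_{j'}$ shadow different phases of the periodic orbit, and such pairs are typically not mean proximal (the paper's remark after its proof adds extra hypotheses precisely to recover dense chaos). Density is not demanded by the statement, so that over-claim is harmless, but the re-synchronization gap is not.
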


\medskip

As an application of this result we show that with this condition it is easy
to construct mean Li-Yorke chaotic systems. See Example \ref{exam}.

\bigskip

Another aim of this paper is to characterize mean proximal systems using
mean asymptotic pairs.
For a TDS $(X,T)$, we say that a pair $(x,y)\in
X\times X$ is \textit{asymptotic} if
\begin{equation*}
\lim\limits_{n\rightarrow\infty}d(T^{n}x,T^{n}y)=0,
\end{equation*}
and we say that $(x,y)\in X\times X$ is \textit{mean asymptotic} if
\begin{equation*}
\lim\limits_{n\rightarrow\infty}\frac{1}{n}\sum
\limits_{i=0}^{n-1}d(T^{i}x,T^{i}y)=0.
\end{equation*}
A TDS is mean asymptotic
if every pair $(x,y)\in X\times X$ is mean asymptotic.

\medskip

It is known that proximal systems
(i.e., systems $(X,T)$ satisfying that every pair $(x,y)\in X\times X$ is proximal)
may have no asymptotic pairs \cite[Theorem
6.1]{LT} (and hence it may happen that every pair is a Li-Yorke pair).
However for mean proximal systems it will be completely different,
see Corollary \ref{diff}.

\medskip

Clearly, mean proximal pairs are proximal
(hence mean proximal systems are proximal), but not vice-versa;
while asymptotic pairs are mean asymptotic, but not vice-versa.
Thus, a priori, we have the following implications
(both for pairs and for systems):
\begin{equation*}
\text{asymptotic}\Rightarrow\text{mean asymptotic}\Rightarrow\text{mean
proximal}\Rightarrow\text{proximal.}
\end{equation*}
The next theorem reverses the central implication for systems.

\medskip

We denote by $M(X,T)$ the set of all $T$-invariant Borel probability
measures on $X.$ Given a set $Y,$ we denote the diagonal in the product
space as $\Delta_{Y}:=\{(y,y):y\in Y\}.$

\medskip

\begin{Theorem}
\label{thm3} Suppose that $(X,T)$ is a TDS. Then the following are equivalent:

\begin{enumerate}
\item $(X,T)$ is mean proximal.

\item $(X,T)$ is mean asymptotic.

\item Every measure $\lambda \in M(X\times X,T\times T)$ satisfies $\lambda
(\Delta _{X})=1$.

\item $(X,T)$ is uniquely ergodic and the unique measure of $M(X,T)$
is a delta measure $\delta_{x_0}$ on a fixed point $x_0$
(the unique fixed point).

\item $(X\times X,T\times T)$ is mean proximal.
\end{enumerate}
\end{Theorem}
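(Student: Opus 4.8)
The plan is to establish the cycle of implications $(1)\Rightarrow(3)\Rightarrow(4)\Rightarrow(2)\Rightarrow(1)$ and then tie in $(5)$ separately. The obvious implications are $(2)\Rightarrow(1)$ (since mean asymptotic pairs are mean proximal, as noted in the excerpt) and $(5)\Rightarrow(1)$ (restrict to the diagonal copies of $X$). So the substance is in going from mean proximality of $(X,T)$ to the strong structural statement $(4)$, and then showing $(4)$ forces $(2)$ and even $(5)$.

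For $(1)\Rightarrow(3)$ I would argue by contradiction. Suppose $\lambda\in M(X\times X,T\times T)$ with $\lambda(\Delta_X)<1$. Then $\int_{X\times X} d(x,y)\,d\lambda(x,y)=:c>0$. Since $\lambda$ is $T\times T$-invariant, Birkhoff's ergodic theorem applied to the continuous function $(x,y)\mapsto d(x,y)$ gives that for $\lambda$-a.e.\ $(x,y)$ the Cesàro averages $\frac1n\sum_{i=0}^{n-1} d(T^i x,T^i y)$ converge to the conditional expectation of $d$, whose integral is $c>0$; in particular on a positive-measure set the limit (hence the liminf) is strictly positive, contradicting mean proximality of that pair. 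Thus $(1)\Rightarrow(3)$.

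For $(3)\Rightarrow(4)$: take any $\mu\in M(X,T)$ and any $\nu\in M(X,T)$, and form a joining, e.g.\ $\mu\times\nu$ is not invariant in general, but we can use that any ergodic component of $\mu\times\nu$ pushed through — more cleanly: for $\mu\in M(X,T)$, the measure $\mu$ lifts to the diagonal-supported $(\mathrm{id}\times\mathrm{id})_*\mu\in M(X\times X,T\times T)$, but to compare two measures one considers $\lambda\in M(X\times X,T\times T)$ projecting to $\mu$ and $\nu$ on the two coordinates (such $\lambda$ exists by compactness/invariance, taking a weak-$*$ limit of $\frac1n\sum_{i<n}(T\times T)^i_*(\mu\times\nu)$). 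By $(3)$, $\lambda(\Delta_X)=1$, which forces $\mu=\nu$; hence $(X,T)$ is uniquely ergodic, say with measure $\mu$. Applying $(3)$ once more to $(\mathrm{id}\times\mathrm{id})_*\mu$ is vacuous, but: the push-forward $T_*\mu=\mu$ together with $\lambda:=(\mathrm{id}\times T)_*\mu\in M(X\times X,T\times T)$ (which is $T\times T$-invariant since $T_*\mu=\mu$) has $\lambda(\Delta_X)=1$, so $x=Tx$ for $\mu$-a.e.\ $x$; thus $\mu$ is supported on $\mathrm{Fix}(T)$. Finally, if $\mu$ were supported on two or more fixed points we could write $\mu=\frac12\mu_1+\frac12\mu_2$ as a nontrivial convex combination of invariant measures, contradicting unique ergodicity; so $\mu=\delta_{x_0}$ for a single fixed point $x_0$, and uniqueness of $x_0$ also follows since any fixed point carries an invariant delta measure.

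For $(4)\Rightarrow(2)$: since the only invariant measure is $\delta_{x_0}$, unique ergodicity gives uniform convergence $\frac1n\sum_{i=0}^{n-1} f(T^i x)\to f(x_0)$ for every $f\in C(X)$, uniformly in $x$. Apply this with $f=d(\cdot,x_0)$ to get $\frac1n\sum_{i=0}^{n-1} d(T^i x,x_0)\to 0$ uniformly in $x$. Then for any pair $(x,y)$ the triangle inequality gives $\frac1n\sum_{i=0}^{n-1} d(T^i x,T^i y)\le \frac1n\sum_{i=0}^{n-1} d(T^i x,x_0)+\frac1n\sum_{i=0}^{n-1} d(x_0,T^i y)\to 0$, so $(x,y)$ is mean asymptotic; this proves $(2)$, and the same uniform estimate applied in $X\times X$ (which has unique invariant measure $\delta_{(x_0,x_0)}$, a fixed point) gives $(5)$. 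The main obstacle is the bookkeeping in $(3)\Rightarrow(4)$ — correctly producing the auxiliary invariant joinings on $X\times X$ and extracting from $\lambda(\Delta_X)=1$ both unique ergodicity and the fixed-point support — so I would be careful there; the rest is standard ergodic-theoretic manipulation.
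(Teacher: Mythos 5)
Your proposal is correct, and it is logically complete: $(2)\Rightarrow(1)$, $(1)\Rightarrow(3)\Rightarrow(4)\Rightarrow(2)$ closes a cycle through $(1)$--$(4)$, and $(4)\Rightarrow(5)\Rightarrow(1)$ ties in $(5)$. However, you take a genuinely different route from the paper at several points. The paper's cycle is $(1)\Rightarrow(3)\Rightarrow(2)\Rightarrow(1)$ together with $(3)\Leftrightarrow(4)$ and $(4)\Rightarrow(5)\Rightarrow(3)$. For $(1)\Rightarrow(3)$ the paper passes through the ergodic decomposition, picks a generic point off the diagonal, and bounds the Ces\`aro averages from below via an open set $D_\epsilon=\{d>\epsilon\}$ of positive measure; your direct application of Birkhoff to the continuous function $(x,y)\mapsto d(x,y)$, noting $\int d\,d\lambda>0$, is cleaner and avoids both the decomposition and the lower-semicontinuity bookkeeping. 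For $(3)\Rightarrow(4)$ the paper deduces the fixed-point support by citing the Akin--Kolyada characterization of proximal systems, whereas your graph joining $(\mathrm{id}\times T)_*\mu$ (which is indeed $T\times T$-invariant when $T_*\mu=\mu$, and puts full mass on $\Delta_X$ iff $\mu(\mathrm{Fix}(T))=1$) makes this step self-contained. Your $(4)\Rightarrow(2)$ via uniform convergence of Birkhoff averages of $f=d(\cdot,x_0)$ replaces the paper's $(3)\Rightarrow(2)$, which argues contrapositively that a non-mean-asymptotic pair semi-generates an invariant measure off the diagonal; your version additionally yields the stronger conclusion that the convergence to $0$ is uniform. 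Two small remarks: your parenthetical claim that $\mu\times\nu$ is ``not invariant in general'' is mistaken --- it is $T\times T$-invariant whenever $\mu,\nu\in M(X,T)$, so the Ces\`aro averaging you then perform is unnecessary (though harmless); and in $(4)\Rightarrow(5)$ the assertion that $(X\times X,T\times T)$ has $\delta_{(x_0,x_0)}$ as its unique invariant measure deserves the one-line marginal argument (any $\lambda\in M(X\times X,T\times T)$ has both marginals equal to $\delta_{x_0}$, hence $\lambda(\{x_0\}\times\{x_0\})=1$), which is exactly the paper's $(4)\Rightarrow(3)$ step.
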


\medskip

As we mentioned before proximal systems may have no asymptotic pairs; thus
on one hand the situation for mean proximal systems is very different. Nonetheless, the implications $1)$ $\Leftrightarrow4)$ provides a
measure theoretic analogy of a characterization of proximal systems: a
system is proximal if and only if its unique minimal subset is a fixpoint
$x_{0}$ \cite{AK} (note that in this case invariant measures other than $\delta_{x_{0}}$
are possible).
Mean proximality is a stronger condition: the
fixpoint supports a unique invariant measure.

\medskip

In particular, we also have the following corollary.

\begin{Corollary}\label{diff}
If $(X,T)$ is a mean proximal system, then $(X,T)$ has no mean Li-Yorke
pairs.
\end{Corollary}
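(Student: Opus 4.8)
The plan is to deduce this immediately from Theorem \ref{thm3}, using only the equivalence $(1)\Leftrightarrow(2)$. First I would observe that since $(X,T)$ is mean proximal, Theorem \ref{thm3} tells us it is mean asymptotic, so \emph{every} pair $(x,y)\in X\times X$ satisfies
\[
\lim_{n\to\infty}\frac1n\sum_{i=0}^{n-1}d(T^ix,T^iy)=0.
\]
Next I would note that the existence of this limit forces $\limsup_{n\to\infty}\frac1n\sum_{i=0}^{n-1}d(T^ix,T^iy)=0$ for every pair. But by definition a mean Li-Yorke pair (with any modulus $\eta>0$) must satisfy $\limsup_{n\to\infty}\frac1n\sum_{i=0}^{n-1}d(T^ix,T^iy)>0$. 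These two facts are incompatible, so $(X,T)$ admits no mean Li-Yorke pair.

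I do not expect any genuine obstacle: once Theorem \ref{thm3} is available, the remaining content is the elementary fact that a sequence of nonnegative reals which converges to $0$ has $\limsup$ equal to $0$. The only point requiring care is to invoke the correct (nontrivial) direction, namely that mean proximality implies mean asymptoticity, which is exactly $(1)\Rightarrow(2)$ of Theorem \ref{thm3}. One could alternatively argue through condition $(4)$ (unique ergodicity with the unique measure $\delta_{x_0}$ supported on a fixed point), but that is a detour; the cleanest and shortest route is the one above.
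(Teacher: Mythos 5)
Your proof is correct and matches the paper's intended derivation: the corollary is stated as an immediate consequence of Theorem \ref{thm3}, precisely via the implication that a mean proximal system is mean asymptotic, so every pair has Ces\`aro averages converging to $0$ and hence vanishing $\limsup$, which rules out mean Li--Yorke pairs. No issues.
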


\bigskip

This paper is organized as follows. In Section 2, we prove Theorem \ref{thm1}%
. In Section 3, using measure-theoretic tools, we focus on mean proximal
pairs; we provide a proof of Theorem \ref{thm3}.

\medskip

\noindent\textbf{Acknowledgements: } We would like to thank professors Wen
Huang and Xiangdong Ye for useful comments and very helpful suggestions
concerning this paper. The first author would like to thank the dynamical
systems group at the University of Science and Technology of China (in particular Yixiao Qiao and Jie Li ) for their
hospitality. We also thank the referee for the provided suggestions, in
particular for very helpful comments regarding Theorem \ref{thm3}.

\medskip

\section{Proof of Theorem \protect\ref{thm1}}

We first state the following useful result due to Mycielski. For details see
\cite[Theorem 5.10]{A}.

\begin{Lemma}[Mycielski's lemma]
\label{mycl} Let $Y$ be a perfect compact metric space and $C$ be a
symmetric dense $G_{\delta}$ subset of $Y\times Y$. Then there exists a
dense subset $K\subset Y$ which is a union of countably many Cantor sets
such that $K\times K\subset C\cup\Delta_{Y}$.
\end{Lemma}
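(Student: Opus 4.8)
The plan is to build $K$ as a countable union of Cantor sets, all produced simultaneously by a single level-by-level construction, arranged so that two distinct points of $K$ — even when they come from different Cantor sets — are forced into $C$. Since $C$ is a dense $G_{\delta}$, I first write $C=\bigcap_{n\ge1}U_{n}$ with each $U_{n}$ open and dense in $Y\times Y$; replacing $U_{n}$ by $U_{1}\cap\cdots\cap U_{n}$ intersected with its image under the flip $(u,v)\mapsto(v,u)$, and using that $C$ is symmetric, I may assume the $U_{n}$ are symmetric, open, dense and decreasing. I also fix a countable base $\{B_{k}\}_{k\ge1}$ of nonempty open subsets of $Y$, which will be used to force density.

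The heart of the argument is an induction that produces, at each level $n$, a finite family of nonempty open sets (the \emph{nodes} of level $n$), organized as the level-$n$ slices of finitely many binary trees, subject to three invariants: (i) every node has diameter $<2^{-n}$; (ii) each non-root node has its closure inside its parent, and the two children of a node have disjoint closures; (iii) for any two distinct level-$n$ nodes $V,W$ one has $\overline{V}\times\overline{W}\subset U_{n}$, hence, by monotonicity, $\overline{V}\times\overline{W}\subset U_{1}\cap\cdots\cap U_{n}$. Passing from level $n$ to level $n+1$, I split every current node into two children and, in addition, create one brand-new root inside the first so-far-unused $B_{k}$; the new level-$(n+1)$ nodes are then small open balls around finitely many carefully chosen points. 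To choose these points, let $r$ be the number of level-$n$ nodes and consider the finite product of the relevant parents (two factors per node to be split, one factor equal to $B_{k}$ for the new root). For each pair of coordinates, the requirement that the corresponding pair lie in $U_{n+1}$ pulls back $U_{n+1}$ to a dense open subset of this product, and the requirement that the two coordinates be distinct pulls back the off-diagonal, which is dense open precisely because $Y$ is perfect. Intersecting these finitely many dense open sets yields a nonempty open set; any point in it gives pairwise-distinct points whose pairs all lie in $U_{n+1}$, and shrinking to small balls with pairwise disjoint closures restores (i)--(iii).

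Letting $E_{m}$ be the intersection over all levels of the nodes of the $m$-th tree, each $E_{m}$ is a Cantor set (binary splitting with shrinking diameters and nested disjoint-closure children), and I set $K=\bigcup_{m}E_{m}$, a countable union of Cantor sets. Density is immediate: each $B_{k}$ eventually receives a root whose tree's Cantor set lies inside it, so $K$ meets every basic open set. Finally, if $x\neq y$ in $K$, there is a least level $\ell_{0}$ at which $x$ and $y$ lie in distinct nodes (the branching level if they share a tree, the later root's creation level otherwise); for every $\ell\ge\ell_{0}$ invariant (iii) places $(x,y)$ in $U_{\ell}$, and the case $\ell\le\ell_{0}$ is covered by the nesting, so $(x,y)\in\bigcap_{\ell}U_{\ell}=C$. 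Hence $K\times K\subset C\cup\Delta_{Y}$. The main obstacle is exactly the simultaneous point-selection at each step: one must satisfy all $\binom{2r+1}{2}$ pairwise membership-and-distinctness conditions at once while keeping the new root inside the prescribed $B_{k}$, and it is here that perfectness of $Y$ (making the diagonal nowhere dense) and the stability of density under finite intersection are essential; the remaining bookkeeping that interleaves root-creation for density with splitting for the Cantor structure is routine.
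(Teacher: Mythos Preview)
Your argument is a correct and carefully organized proof of Mycielski's theorem: the simultaneous point-selection via finite intersections of dense open sets in the product is exactly right, perfectness is invoked at the correct place (to make the off-diagonal dense open), and the bookkeeping that interleaves root-creation with binary splitting yields both density of $K$ and the inclusion $K\times K\subset C\cup\Delta_{Y}$.

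Note, however, that the paper does not actually prove this lemma at all: it is stated with a reference to Akin's exposition \cite{A} and used as a black box. So there is no ``paper's own proof'' to compare against. What you have written is essentially the standard proof one finds in that reference (or in Mycielski's original argument), recast in a clean tree-based language. The only minor comment is that your final paragraph's phrase ``the case $\ell\le\ell_{0}$ is covered by the nesting'' is slightly imprecise---what you really use is that the $U_{\ell}$ are decreasing, so $(x,y)\in U_{\ell_{0}}$ already forces $(x,y)\in U_{\ell}$ for $\ell<\ell_{0}$---but this is a wording issue, not a gap.
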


\begin{proof}[\textbf{Proof of Theorem \protect\ref{thm1}}]
Denote by $d$ the metric on $X$. Since $(X,T)$ is mean sensitive, there
exists $\delta>0$ such that for every $x\in X$ and every $\epsilon>0$, there
is $y\in B(x,\epsilon)$ with
\begin{equation*}
\limsup\limits_{N\rightarrow\infty}\frac{1}{N}\sum%
\limits_{k=0}^{N-1}d(T^{k}x, T^{k}y)>\delta.
\end{equation*}
Let $\eta=\delta/2>0$, and
\begin{equation*}
D_{\eta}=\{(x,y)\in X\times X: \limsup\limits_{N\rightarrow\infty}\frac{1}{N}%
\sum\limits_{k=0}^{N-1}d(T^{k}x, T^{k}y)\ge\eta\}.
\end{equation*}
By noting that $D_{\eta}$ can also be written as in the following form
\begin{equation*}
D_{\eta}=\bigcap\limits_{m=1}^{\infty}\bigcap\limits_{l=1}^{\infty}\bigcup%
\limits_{n\ge l} \{(x,y)\in X\times X: \frac{1}{n}\sum%
\limits_{i=0}^{n-1}d(T^{i}x,T^{i}y)>\eta-\frac{1}{m}\},
\end{equation*}
we know that $D_{\eta}$ is a $G_{\delta}$ subset of $X\times X$. If $D_{\eta}
$ is not dense in $X\times X$, then there exist $\epsilon>0$ and $x,z\in X$
such that for every $y\in B(x,\epsilon)$, we have
\begin{equation*}
\limsup\limits_{N\rightarrow\infty}\frac{1}{N}\sum%
\limits_{k=0}^{N-1}d(T^{k}y, T^{k}z)<\eta.
\end{equation*}
It follows that for every $y\in B(x,\epsilon)$, we have
\begin{align*}
& \limsup\limits_{N\rightarrow\infty}\frac{1}{N}\sum%
\limits_{k=0}^{N-1}d(T^{k}x, T^{k}y) \\
\le & \limsup\limits_{N\rightarrow\infty}\frac{1}{N}\sum\limits_{k=0}^{N-1}%
\big(d(T^{k}x, T^{k}z)+d(T^{k}y,T^{k}z)\big) \\
\le & \limsup\limits_{N\rightarrow\infty}\frac{1}{N}\sum%
\limits_{k=0}^{N-1}d(T^{k}x, T^{k}z) +\limsup\limits_{N\rightarrow\infty}%
\frac{1}{N}\sum\limits_{k=0}^{N-1}d(T^{k}y, T^{k}z) \\
< & \eta+\eta=\delta.
\end{align*}
This is a contradiction with the fact that $(X,T)$ is mean sensitive. So $%
D_{\eta}$ is dense in $X\times X$. Thus,
\begin{align}  \label{deta}
D_{\eta}\text{ is a dense } G_{\delta}\text{ subset of } X\times X.
\end{align}

\medskip

Let
\begin{equation*}
MP=\{(x,y)\in X\times X: \liminf\limits_{N\rightarrow\infty}\frac{1}{N}%
\sum\limits_{k=0}^{N-1}d(T^{k}x, T^{k}y)=0\}
\end{equation*}
be the set of all mean proximal pairs of $X\times X$. Then
\begin{align}  \label{mpg}
MP \text{ is a } G_{\delta}\text{ subset of } X\times X
\end{align}
since it is easy to check that
\begin{equation*}
MP=\bigcap\limits_{m=1}^{\infty}\bigcap\limits_{l=1}^{\infty}\bigcup
\limits_{n\ge l} \{(x,y)\in X\times X: \frac{1}{n}\sum%
\limits_{i=0}^{n-1}d(T^{i}x,T^{i}y)<\frac{1}{m}\}.
\end{equation*}

\medskip

Take
\begin{equation*}
MLY_{\eta}=MP\cap D_{\eta}.
\end{equation*}
Clearly, $MLY_{\eta}\subset X\times X$ is the set of all mean Li-Yorke pairs
with modulus $\eta$ in $X\times X$.

\medskip

By hypothesis there exist $p\in X$ a periodic point of period $t$ and $q\in X
$ a transitive point such that the pair $(p,q)\in X\times X$ is mean
proximal. Let
\begin{equation*}
X_{j}=\overline{\{T^{nt+j}q:n\geq0\}}
\end{equation*}
for $0\leq j\leq t-1$.

Since $(p,q)\in MP$, there exists an increasing sequence of positive
integers $\{N_{i}\}_{i=1}^{\infty}$ with $N_{i}\rightarrow\infty$ such that
\begin{equation*}
\lim\limits_{i\rightarrow\infty}\frac{1}{N_{i}}\sum%
\limits_{k=0}^{N_{i}-1}d(T^{k}p,T^{k}q)=0.
\end{equation*}
Hence for any fixed $n\in\mathbb{N}$ and $0\le j\le t-1$ we have
\begin{equation*}
\lim\limits_{i\rightarrow\infty}\frac{1}{N_{i}}\sum%
\limits_{k=0}^{N_{i}-1}d(T^{k+nt+j}p,T^{k+nt+j}q)=0
\end{equation*}
which, together with the fact that $T^{t}p=p$, implies that
\begin{equation*}
\lim\limits_{i\rightarrow\infty}\frac{1}{N_{i}}\sum%
\limits_{k=0}^{N_{i}-1}d(T^{k+nt+j}q,T^{k+j}p)=0.
\end{equation*}
Thus, for any $n_{1},n_{2}\ge0$ and $0\le j\le t-1$, we have
\begin{align*}
& \lim\limits_{i\rightarrow\infty}\frac{1}{N_{i}}\sum%
\limits_{k=0}^{N_{i}-1}d(T^{k+n_{1}t+j}q,T^{k+n_{2}t+j}q) \\
\le & \lim\limits_{i\rightarrow\infty}\frac{1}{N_{i}}\sum%
\limits_{k=0}^{N_{i}-1}d(T^{k+n_{1}t+j}q,T^{k+j}p)
+\lim\limits_{i\rightarrow\infty}\frac{1}{N_{i}}\sum%
\limits_{k=0}^{N_{i}-1}d(T^{k+n_{2}t+j}q,T^{k+j}p) \\
= & 0
\end{align*}
which implies that $(T^{n_{1}t+j}q,T^{n_{2}t+j}q)\in MP$. Thus,
\begin{align}  \label{mpd}
MP \text{ is dense in } X_{j}\times X_{j} \text{\,\;\, for each } 0\le j\le
t-1.
\end{align}

\medskip

Since it is clear that
\begin{equation*}
X=\bigcup_{j=0}^{t-1} X_{j},
\end{equation*}
there exists some $j$ such that $X_{j}$ has non-empty interior, which is
denoted by $A_{j}$. Put
\begin{equation*}
A=\overline{A_{j}}.
\end{equation*}
Since $A_{j}\times A_{j}$ is open (for both $X\times X$ and $X_{j}\times
X_{j}$), by \eqref{deta} and \eqref{mpd}, we know that $D_{\eta}$ and $MP$
are dense in $A_{j}\times A_{j}$, and hence are dense in $A\times A$. Thus,
by \eqref{deta} and \eqref{mpg}, we have that
\begin{align}  \label{dg}
MP\cap D_{\eta}\cap(A\times A) \text{ is a symmetric dense } G_{\delta}\text{
subset of } A\times A.
\end{align}

\medskip

From the definition of the mean sensitivity of $(X,T)$, we know that $X$ is
perfect. Then by noting that $A_{j}$ is open, we have that $A_{j}$ is
perfect, and hence $A$ is perfect. Thus,
\begin{align}  \label{perfect}
A \text{ is a perfect compact metric space.}
\end{align}

\medskip

Now applying Mycielski's lemma (Lemma \ref{mycl}) with \eqref{perfect} and %
\eqref{dg}, and by the definition of $MLY_{\eta}$, we obtain a dense subset $%
K$ of $A$ such that $K$ is a union of countably many Cantor sets and
satisfies
\begin{equation*}
K\times K\subset MLY_{\eta}\cup\Delta_{X}.
\end{equation*}
This implies that $(X,T)$ is mean Li-Yorke chaotic and $K\subset X$ is a
mean Li-Yorke set with modulus $\eta$. This completes the proof.
\end{proof}

\medskip

\begin{Remark}
\label{rmk} According to the proof of Theorem \ref{thm1}, we know that if in
addition, either \newline
$\cdot(X,T)$ is \textit{totally transitive}, i.e., $T^{n}$ is transitive for
each $n\geq1$, or \newline
$\cdot$the periodic point $p$ is a fixed point, \, then $(X,T)$ is \textit{%
densely mean Li-Yorke chaotic}; that is, it admits a dense uncountable mean
Li-Yorke subset (which is $K$ in the proof).
\end{Remark}

\medskip

An important class of topological dynamical systems are subshifts. Let $%
\mathcal{A}$ be a finite set. For $x\in\mathcal{A}^{\mathbb{Z}_{+}}$ and $%
i\in\mathbb{Z}_{+}$ we use $x_{i}$ to denote the $i$th coordinate of $x$ and
$\sigma:X\rightarrow X$ to denote the \textit{shift map} ($x_{i+j}=(\sigma
^{i}x)_{j}$ for all $x\in\mathcal{A}^{\mathbb{Z}_{+}}$ and $j\in\mathbb{Z}%
_{+}$)$.$ Using the product topology of discrete spaces, we have that $%
\mathcal{A}^{\mathbb{Z}_{+}}$ is a compact metrizable space. The metric of
this space is equivalent to the metric%
\begin{equation*}
d(x,y)=\left\{
\begin{array}{cc}
1/\inf\left\{ m+1:x_{m}\neq y_{m}\right\} & \text{if }x\neq y \\
0 & \text{if }x=y%
\end{array}
.\right.
\end{equation*}
A subset $X\subset\mathcal{A}^{\mathbb{Z}_{+}}$ is a \textit{subshift (or
shift space)} if it is closed and $\sigma-$invariant$;$ in this case $%
(X,\sigma)$ is a TDS.

As it was noted in \cite{LTY} and
\cite{G} mean sensitivity and (with similar
arguments) mean proximality can be expressed in terms of densities. In
particular, if $(X,\sigma)$ is a subshift, $x,y\in X,$ and
\begin{equation*}
\liminf_{n\rightarrow\infty}\frac{\left\vert \left\{ i\leq n\mathbb{\mid }%
\text{ }x_{i}\neq y_{i}\right\} \right\vert }{n}=0,
\end{equation*}
then $x$ and $y$ are mean proximal.

A finite string of symbols is a \textit{word. }Given a word $w$ the set $%
Pow(w)$ is the set that contains all the possible subwords of $w.$ For
example, if $w=011$, $Pow(w)=\left\{ 0,1,01,11,011\right\} $.

\begin{Example}
\label{exam} There exists a mean sensitive subshift $X\subset\left\{
0,1\right\} ^{\mathbb{Z}_{+}}$ with a transitive point $x\in X$ such that $x$
and $0^{\infty}$ are mean proximal (and thus this system contains a dense
uncountable mean Li-Yorke subset).
\end{Example}

\begin{proof}
We will construct a sequence of words inductively with
\begin{equation*}
w_{1}=0.
\end{equation*}

Let $A_{k}:=Pow(w_{k-1}):=\left\{ v_{i}\right\} _{i=1}^{\left\vert
Pow(w_{k-1})\right\vert }.$

We define
\begin{equation*}
w_{k}:=w_{k-1}0^{n_{k}}v_{1}0^{k}v_{1}1^{k}v_{2}0^{k}v_{2}1^{k}...v_{\left%
\vert Pow(w_{k-1})\right\vert }0^{k}v_{\left\vert Pow(w_{k-1})\right\vert
}1^{k},
\end{equation*}
where $n_{k}\geq\left\vert w_{k}\right\vert (1-1/k).$

Let $x=\lim_{k\rightarrow\infty}w_{k}$ and let $X$ be the shift orbit
closure of $x.$

The construction implies that for every finite word $v$ appearing in $x$ we
have that $v0^{\infty}\in X$ and $v1^{\infty}\in X.$ This means that $%
(X,\sigma)$ is mean sensitive.

Using that $n_{k}\geq\left\vert w_{k}\right\vert (1-1/k)$ and that every $%
w_{k}$ contains $0^{n_{k}}$ we obtain that the density of $0s$ in $x$ is
one. This implies that $x$ and $0^{\infty\text{ }}$are mean proximal.
\end{proof}

\medskip

\section{On Mean Proximal Sets}

In this section we characterize mean proximal systems.

\medskip

We remark here that as an application of (the corollary of) Theorem \ref%
{thm3} we obtain that the whole space $X$ cannot become a mean Li-Yorke set
for any nontrivial TDS $(X,T)$. However, we also note that if we only want
to observe that for any $\eta>0$, $X$ cannot be a mean Li-Yorke set with
modulus $\eta$, it will be much easier. In fact, if this holds then, on the
one hand, for every pair $(x,y)\in X\times X$ of distinct points, we have
\begin{equation*}
\limsup_{n\rightarrow\infty}\frac{1}{n}\sum_{i=0}^{n-1}d(T^{i}x,T^{i}y)\geq%
\eta;
\end{equation*}
on the other hand, according to \cite[Theorem 2.1]{K}, we can find a pair $%
(x,y)\in X\times X$ of distinct points such that $d(T^{i}x,T^{i}y)<\eta/2$
for all $i\in\mathbb{N}$. This implies that
\begin{equation*}
\limsup_{n\rightarrow\infty}\frac{1}{n}\sum_{i=0}^{n-1}d(T^{i}x,T^{i}y)\leq%
\eta/2<\eta,
\end{equation*}
a contradiction.

\medskip

Before we give the proof of Theorem \ref{thm3}, we provide another result
which only concerns the condition of mean proximality. We remind the reader
that $M(X,T)$ represents the set of all $T$-invariant Borel probability
measures on $X$.

\begin{Theorem}
\label{similar} Let $(X,T)$ be a TDS and $A$ be a Borel subset of $X$ such
that every pair $(x,y)\in A\times A$ is mean proximal. Then for every
non-atomic measure $\mu\in M(X,T)$, we have $\mu(A)=0$.
\end{Theorem}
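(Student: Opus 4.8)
The plan is to work in the product system $(X\times X,\,T\times T)$ equipped with the $(T\times T)$-invariant product measure $\mu\times\mu$, and to argue by contradiction assuming $\mu(A)>0$. Since the metric $d$ is continuous on the compact space $X\times X$ it is bounded, hence lies in $L^{1}(\mu\times\mu)$; applying Birkhoff's ergodic theorem to $d$ and the transformation $T\times T$ we obtain a $(T\times T)$-invariant function $\bar d\in L^{1}(\mu\times\mu)$ with
\[
\frac{1}{n}\sum_{i=0}^{n-1} d(T^{i}x,T^{i}y)\ \longrightarrow\ \bar d(x,y)
\]
for $\mu\times\mu$-almost every $(x,y)$.

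First I would locate $A\times A$ inside the zero set $Z:=\{\bar d=0\}$. For $\mu\times\mu$-a.e.\ $(x,y)$ the averages above converge to $\bar d(x,y)$, so in particular their $\liminf$ equals $\bar d(x,y)$; but each $(x,y)\in A\times A$ is mean proximal, so that $\liminf$ is $0$. Hence $\bar d=0$ $\mu\times\mu$-a.e.\ on $A\times A$, i.e.\ $A\times A\subset Z$ modulo a $\mu\times\mu$-null set, and therefore $\mu(A)^{2}=(\mu\times\mu)(A\times A)\le(\mu\times\mu)(Z)$. It thus suffices to show $(\mu\times\mu)(Z)=0$.

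Next I would combine invariance with the non-atomicity of $\mu$. Since $\bar d$ is $(T\times T)$-invariant modulo null sets, so is $Z$, and therefore $\nu:=(\mu\times\mu)|_{Z}$ is a finite $(T\times T)$-invariant measure on $X\times X$. By invariance of $\nu$ one has, for every $n$, $\int d\,d\nu=\int\frac{1}{n}\sum_{i=0}^{n-1}d\circ(T\times T)^{i}\,d\nu$; the integrand on the right is bounded by $\operatorname{diam}(X)$ and, because $\nu$ is carried by $Z$ and $\nu\ll\mu\times\mu$, converges to $0$ $\nu$-almost everywhere, so bounded convergence forces $\int d\,d\nu=0$. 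As $d\ge 0$, the measure $\nu$ is concentrated on the diagonal $\Delta_{X}$. Since $\mu$ is non-atomic, Fubini gives $(\mu\times\mu)(\Delta_{X})=\int_{X}\mu(\{x\})\,d\mu(x)=0$, whence $(\mu\times\mu)(Z)=\nu(X\times X)=\nu(\Delta_{X})\le(\mu\times\mu)(\Delta_{X})=0$, contradicting $\mu(A)>0$. This forces $\mu(A)=0$.

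The step requiring the most care is the last one: one must route the argument through the $(T\times T)$-invariant set $Z$ rather than through $A\times A$ itself, because $A$ — and hence $A\times A$ — is not assumed invariant, so $(\mu\times\mu)|_{A\times A}$ need not be invariant; and one should keep in mind that the Birkhoff limit $\bar d$ is invariant only modulo null sets, which is nonetheless enough to make $(\mu\times\mu)|_{Z}$ genuinely $(T\times T)$-invariant. The non-atomicity of $\mu$ is invoked exactly once, to annihilate the diagonal, while the mean proximality hypothesis enters only to secure the inclusion $A\times A\subset Z$ up to a null set.
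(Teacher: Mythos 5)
Your argument is correct, and it reaches the conclusion by a genuinely different mechanism than the paper. The paper also passes to $(X\times X,T\times T)$ and uses non-atomicity to discard the diagonal, but from there it takes the ergodic decomposition of $\mu\times\mu$, extracts an ergodic component $\lambda_\omega$ charging $A\times A\setminus\Delta_X$, picks a Birkhoff-generic mean proximal pair $(x_0,y_0)$ for it, and derives a contradiction from the weak$^\ast$ lower semicontinuity of measures of the open sets $D_\epsilon=\{d>\epsilon\}$ along the empirical measures of that single orbit. You instead work globally with the Birkhoff limit function $\bar d$ of the (bounded, hence integrable) observable $d$: mean proximality forces $\bar d=0$ a.e.\ on $A\times A$, the invariant set $Z=\{\bar d=0\}$ carries an invariant restricted measure $\nu$, the invariance plus bounded convergence gives $\int d\,d\nu=0$, hence $\nu$ lives on $\Delta_X$, which non-atomicity annihilates. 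Your route avoids both the ergodic decomposition and the generic-point/portmanteau step, at the cost of the (correctly handled) subtlety that $A\times A$ is not invariant, so one must route the invariance argument through $Z$; the paper's route, by contrast, is the one its authors reuse almost verbatim for the implication $(1)\Rightarrow(3)$ of Theorem 3.2, where the measure $\lambda$ is an arbitrary joining rather than a product and the decomposition-plus-generic-point template applies unchanged. Both proofs use non-atomicity exactly once and in the same place, to get $(\mu\times\mu)(\Delta_X)=0$.
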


\begin{proof}
Suppose $\mu\in M(X,T)$ is non-atomic and $\mu(A)>0$. Let
\begin{equation*}
\mu\times\mu=\int_{\Omega}\lambda_\omega d\xi(\omega)
\end{equation*}
be the ergodic decomposition of $\mu\times\mu$ with respect to $T\times T$.
Since $\mu$ is non-atomic, which implies that $\mu\times\mu(\Delta_{X})=0$,
we have $\mu\times\mu(A\times A\setminus\Delta_{X})>0$. Hence there exists
an ergodic measure $\lambda_{\omega}\in M(X\times X,T\times T)$ with $%
\lambda_{\omega}(A\times A\setminus\Delta_{X})>0$. By Birkhoff's Pointwise
Ergodic Theorem, we can find a generic point $(x_{0},y_{0})\in A\times
A\setminus\Delta_{X}$ of $\lambda_{\omega}$ for $T\times T$, i.e., there
exists a pair $(x_{0},y_{0})\in A\times A\setminus\Delta_{X}$ such that
\begin{equation*}
\frac{1}{N}\sum\limits_{i=0}^{N-1}\delta_{(T\times
T)^{i}(x_{0},y_{0})}\rightarrow\lambda_{\omega}
\end{equation*}
under the weak$^{\ast}$-topology as $N\rightarrow\infty$. Here the
probability measure $\delta_{(T\times T)^{i}(x_{0},y_{0})}$ denotes the
point mass on $(T\times T)^{i}(x_{0},y_{0})=(T^{i}x_{0},T^{i}y_{0})$ of $%
X\times X$.

On the one hand, since the pair $(x_{0},y_{0})\in A\times A\setminus\Delta
_{X}$ is mean proximal, we have
\begin{equation*}
\liminf_{N\rightarrow\infty}\frac{1}{N}%
\sum_{i=0}^{N-1}d(T^{i}x_{0},T^{i}y_{0})=0.
\end{equation*}
Now since $\lambda_{\omega}(X\times X\setminus\Delta_{X})\geq\lambda_{\omega
}(A\times A\setminus\Delta_{X})>0$, there exists $\epsilon>0$ satisfying $%
\lambda_{\omega}(D_{\epsilon})>0$, where $D_{\epsilon}=\{(x,y)\in X\times
X:d(x,y)>\epsilon\}$. By noting that $D_{\epsilon}$ is open, it then follows
that
\begin{equation*}
\liminf_{N\rightarrow\infty}\frac{1}{N}%
\sum_{i=0}^{N-1}d(T^{i}x_{0},T^{i}y_{0})\geq\epsilon\cdot\liminf_{N%
\rightarrow\infty}\frac{1}{N}\sum_{i=0}^{N-1}%
\delta_{(T^{i}x_{0},T^{i}y_{0})}(D_{\epsilon})\geq\epsilon
\lambda_{\omega}(D_{\epsilon})>0,
\end{equation*}
a contradiction. This proves the result.
\end{proof}

\medskip

\begin{proof}[\textbf{Proof of Theorem \protect\ref{thm3}}]
(2)$\Longrightarrow$(1). Obvious.

\medskip

(3)$\Longrightarrow$(2). Suppose that (2) does not hold, then we can find a
pair $(x_{1},y_{1})\in X\times X$ such that $(x_{1},y_{1})$ is not mean
asymptotic. This implies that
the orbit of $(x_1,y_1)$ (in $X\times X$) spends outside some neighborhood
$U$ of the diagonal $\Delta_X$ time which has positive lower density.
Thus, the pair $(x_1,y_1)$ semi-generates (i.e., generates along a subsequence
of averages) a measure $\nu\in M(X\times X,T\times T)$ with $\nu(X\times X\setminus U)>0$.
This means that the measure $\nu$ is not supported by $\Delta_X$.

\medskip

(1)$\Longrightarrow$(3). Suppose that (3) does not hold, then there is some $%
\lambda\in M(X\times X, T\times T)$ satisfying $\lambda(\Delta_{X})<1$ which
implies
\begin{equation*}
\lambda(X\times X\setminus\Delta_{X})>0.
\end{equation*}
From now on, we begin to use the similar argument as in the proof of Theorem %
\ref{similar} to complete our proof. We do this as follows.

\medskip

Since $\lambda(X\times X\setminus\Delta_{X})>0$ and $\lambda\in M(X\times
X,T\times T)$, by the Ergodic Decomposition Theorem, there exists an ergodic
measure $\lambda_{\omega}\in M(X\times X,T\times T)$ with
\begin{equation*}
\lambda_{\omega}(X\times X\setminus\Delta_{X})>0.
\end{equation*}
It then follows from the Birkhoff Pointwise Ergodic Theorem that there
exists a pair $(x_{2},y_{2})\in X\times X\setminus\Delta_{X}$ such that
\begin{equation*}
\frac{1}{N}\sum\limits_{k=0}^{N-1}\delta_{(T\times
T)^{k}(x_{2},y_{2})}\rightarrow\lambda_{\omega}
\end{equation*}
as $N\rightarrow\infty$ under the weak$^{\ast}$-topology.

Since $\lambda_{\omega}(X\times X\setminus\Delta_{X})>0$, we can take some $%
\epsilon>0$ and put
\begin{equation*}
D_{\epsilon}=\{(x,y)\in X\times X: d(x,y)>\epsilon\}
\end{equation*}
such that $\lambda_{\omega}(D_{\epsilon})>0$.

Then, by noting that $D_{\epsilon}$ is open, we have
\begin{equation*}
\liminf_{N\rightarrow\infty}\frac{1}{N}%
\sum_{k=0}^{N-1}d(T^{k}x_{2},T^{k}y_{2})\geq\epsilon\cdot\liminf_{N%
\rightarrow\infty}\frac{1}{N}\sum_{k=0}^{N-1}%
\delta_{(T^{k}x_{2},T^{k}y_{2})}(D_{\epsilon})\geq\epsilon
\lambda_{\omega}(D_{\epsilon})>0,
\end{equation*}
which is a contradiction with the assumption that every pair $(x,y)$ of $%
X\times X$ is mean proximal.

(3)$\Longrightarrow$(4).

Suppose that there exists $\mu\in M(X,T)$ such that the support contains at
least two different points, $x_{0}$ and $y_{0}$. This implies that the
support of $\lambda=\mu\times\mu$ contains the point $(x_{0},y_{0}).$ We
conclude that $\lambda(\Delta_{X})<1.$ This implies that if ($3$) holds then
every invariant measure is supported on a fixed point. Nonetheless we
already know ($3$) implies $(X,T)$ is proximal and hence it only contains
one fixed point \cite{AK}; thus $(X,T)$ is uniquely ergodic.

(4)$\Longrightarrow$(3).

Let $x_{0}$\medskip\ be the only fixed point, $\delta_{x_{0}}$ the unique
invariant measure of $(X,T),$ and $\lambda\in M(X\times X,T\times T).$ This
implies that $\lambda(x_{0}\times X)=\delta_{x_{0}}(x_{0})=1$ and
analogously $\lambda(X\times x_{0})=1;$ thus $\lambda(x_{0}\times x_{0})=1.$

(4)$\Longrightarrow$(5).

In the previous step we showed that $\lambda(x_{0}\times x_{0})=1.$ To
conclude simply apply (4)$\Longrightarrow$(1) for the system $(X\times
X,T\times T)$.

(5)$\Longrightarrow$(3).

Using (1)$\Longrightarrow$(4) for the system $(X\times X,T\times T)$ we
conclude it has a unique fixed point and a unique invariant (delta) measure.
The fixed point must lie on the diagonal, so for the unique invariant
measure $\lambda\in M(X\times X,T\times T)$ we have that $%
\lambda(\Delta_{X})=1.$
\end{proof}

\medskip


\begin{thebibliography}{99}
\bibitem{A} E. Akin, \textit{Lectures on Cantor and Mycielski sets for
dynamical systems}, Chapel Hill Ergodic Theory Workshops, 21-79, Contemp.
Math., 356, Amer. Math. Soc., Providence, RI, 2004.

\bibitem{AK} E. Akin and S. Kolyada, \textit{Li-Yorke sensitivity},
Nonlinearity, \textbf{16} (2003), no. 4, 1421--1433.

\bibitem{BGKM} F. Blanchard, E. Glasner, S. Kolyada and A. Maass, \textit{On
Li-Yorke pairs}, J. Reine Angew. Math., \textbf{547}(2002), 51-68.

\bibitem{BHS} F. Blanchard, W. Huang and L. Snoha, \textit{Topological size
of scrambled sets}, Colloq. Math., 110(2), 2008, 293-361.

\bibitem{D} T. Downarowicz, \textit{Positive topological entropy implies
chaos DC2}, Proc. Amer. Math. Soc., \textbf{142}(2014), 137-149.

\bibitem{DG} T. Downarowicz and E. Glasner, \textit{Isomorphic extensions
and applications}, arXiv:1502.06999, 2015.

\bibitem{DL} T. Downarowicz and Y. Lacroix, \textit{Forward mean proximal
pairs and zero entropy}, Israel Journal of Mathematics, \textbf{191-2}%
(2012), 945-957.

\bibitem{DL2} T. Downarowicz and Y. Lacroix, \textit{Measure-theoretic chaos}%
, Ergodic theory and dynamical systems, \textbf{34}(2012), 110-131.

\bibitem{DS} T. Downarowicz and M. Stefankova, \textit{Embedding Toeplitz
systems in triangular maps: The last but one problem of the Sharkovsky
classification program}, Chaos, Solitons \& Fractals, \textbf{45}(2012),
no.12, 1566-1572.

\bibitem{FKKL} Fryderyk Falniowski, Marcin Kulczycki, Dominik Kwietniak and
Jian Li, \textit{Two results on entropy, chaos, and independence in symbolic
dynamics}, arXiv:1502.03981, 2015.

\bibitem{FPS} G. L. Forti, L. Paganoni and J. Smital, \textit{Dynamics of
homeomorphisms on minimal sets generated by triangular mappings}, Bull.
Austral. Math. Soc., \textbf{59}(1999), no.1, 1-20.

\bibitem{G} F. Garc\'{\i}a-Ramos, \textit{Weak forms of topological and
measure theoretical equicontinuity: realtionships with discrete spectrum and
sequence entropy,} Ergodic theory and dynamical systems (to appear), 2015.

\bibitem{GL} F. Garc\'{\i}a-Ramos, J. Li and R. Zhang, \textit{When is a
system mean sensite?}, in progress.

\bibitem{HLY} W. Huang, J. Li and X. Ye, \textit{Stable sets and mean
Li-Yorke chaos in positive entropy systems}, J. Funct. Anal., \textbf{266}%
(2014), no.6, 3377-3394.

\bibitem{HY} W. Huang and X. Ye, \textit{Devaney's chaos or 2-scattering
implies Li-Yorke's chaos}, Topol. Appl., \textbf{117}(2002), no.3, 259-272.

\bibitem{I} A. Iwanik, \textit{Independence and scrambled sets for chaotic
mappings}, The mathematical heritage of C. F. Gauss, World Sci. Publ., River
Edge, NJ, (1991), pp. 372-378

\bibitem{K} J. L. King, \textit{A map with topological minimal self-joinings
in the sense of del Junco}, Ergod. Th. \& Dynam. Sys., \textbf{10}(1990),
no.4, 745-761.

\bibitem{LT} J. Li and S. Tu, \textit{On proximality with Banach density one}%
, J. Math. Anal. Appl., \textbf{416}(2014), no.1, 36-51.

\bibitem{LTY} J. Li, S. Tu and X. Ye, \textit{Mean equicontinuity and mean
sensitivity}, Ergod. Th. \& Dynam. Sys., 35 (08), (2015),2587-2612.

\bibitem{LY} J. Li and X. Ye, \textit{Recent development of chaos theory in
topological dynamics}, Acta Mathematica Sinica, English Series, 32(1), 2016,
83-114.

\bibitem{O} P. Oprocha, \textit{Relations between distributional and Devaney
chaos}, chaos, \textbf{16}(2006), 033112. [MR2265261].

\bibitem{OW} D. Ornstein and B. Weiss, \textit{Mean distality and tightness}%
, Proc. Steklov Inst. Math., \textbf{244}(2004), no.1, 295-302.

\bibitem{SS} B. Schweizer and J. Smital, \textit{Measures of chaos and a
spectral decomposition of dynamical systems on the interval}, Transactions
of the American Mathematical Society, \textbf{02}(1994); 344(2).
\end{thebibliography}
\end{document}